\theoremstyle{plain}
\newtheorem{theorem}{Theorem}[section]
\newtheorem*{Theorem B}{Theorem B}
\newtheorem*{Theorem A}{Theorem A}
\newtheorem{lemma}{Lemma}[section]
\numberwithin{equation}{section}
\theoremstyle{remark}
 \numberwithin{equation}{section}
\title{Geometry of quasi-sum production functions with constant elasticity of substitution property}
\author{ Bang-Yen Chen}
\address{\it Michigan State University \newline\indent
Department of Mathematics  \newline\indent
619 Red Cedar Road,  \newline\indent East Lansing, Michigan
48824--1027, U.S.A.}
\email{bychen@math.msu.edu}
\subjclass[2010]{Primary:  90A11; Secondary 91B38}
\keywords{Quasi-sum production function,  linearly homogeneous ACMS  function,  linearly homogeneous Cobb-Douglas  function, Gauss-Kronecker curvature.}
\begin{document}

\begin{abstract}
 A production function $f$ is called quasi-sum if there are  strict monotone functions $F, h_1,\ldots,h_n$ with $F'>0$ such that $$f(x)= F(h_1
(x_1)+\cdots +h_n (x_n)).$$ The justification for studying quasi-sum production functions is that these functions appear as solutions of the
general bisymmetry equation and they are related to the problem of consistent aggregation.

In this article, first we present the classification of quasi-sum production functions satisfying the constant elasticity of substitution property. 
Then we prove that if a quasi-sum production function  satisfies the constant elasticity of substitution property, then its graph has vanishing Gauss-Kronecker curvature (or its graph is a flat space) if and only if the production function is either a linearly homogeneous generalized ACMS  function or a linearly homogeneous generalized Cobb-Douglas  function.\end{abstract}

\maketitle
\pagestyle{myheadings}
\markboth{\hspace{0,45 cm}\hrulefill\ B.-Y. Chen}{{\it Geometry of quasi-sum production functions} \hrulefill\hspace{0,45 cm} }


\setcounter{equation}{0}
\section{\uppercase{Introduction}}

In economics, a {production function} is a positive function with non-vanishing first derivatives that specifies the output of a firm, an industry, or an entire economy for all combinations of inputs. Almost all economic theories presuppose a production function, either on the firm level or the aggregate level. In this sense, the production function is one of the key concepts of mainstream neoclassical theories.
By assuming that the maximum output technologically possible from a given set of inputs is achieved, economists using a production function in analysis are abstracting from the engineering and managerial problems inherently associated with a particular production process. 

Let $\mathbb R$ denote the set of real numbers. Put 
$$ {\mathbb R}_{+}=\{r\in {\mathbb R}:r>0\}\;\;{\rm and}\;\;  {\mathbb R}^{n}_{+}=\{(x_{1},\ldots,x_{n})\in {\mathbb R}^{n}:x_{1},\ldots,x_{n}>0\}.$$
By a {\it production function} we mean
a twice differentiable function $f: D\subset {\mathbb R}^{n}_{+}\to {\mathbb R}_{+}$ from a domain $D$  of ${\mathbb R}^{n}_{+}$ into ${\mathbb R}^+$ which has non-vanishing first derivatives. 

  A  production function $f$ is called quasi-sum if there exist strict monotone functions $F, h_1,\ldots,h_n$ with $F'>0$ such that $$f({\bf x})= F(h_1
(x_1)+\cdots +h_n (x_n)).$$
The justification for studying quasi-sum production functions is that these functions appear as solutions of the
general bisymmetry equation and they are related to the problem of consistent aggregation (see \cite{AM}). 

The most common quantitative indices of production factor substitutability are forms of the elasticity of substitution. The elasticity of substitution was originally introduced by J.~R. Hicks \cite{H} in case of two inputs for the purpose of analyzing changes in the income shares of labor and capital. 
R.~G. Allen and J.~R. Hicks \cite{AH} suggested a generalization of Hicks' original two variable
elasticity concept  as follows:
For a production function $f$, put
\begin{align} \label{1.1}H_{ij}({\bf x})=\text{\small$\frac{\dfrac{1}{x_{i}f_{x_{i}}}+\dfrac{1}{x_{j}f_{x_{j}}}}{-\dfrac{f_{x_{i}x_{i}}}{f_{x_{i}}^{2}}+\dfrac{2f_{x_{i}x_{j}}}{f_{x_{i}}f_{x_{j}}}-\dfrac{f_{x_{j}x_{j}}}{f_{x_{j}}^{2}}}$} \end{align}
for ${\bf x}\in {\mathbb R}^{n}_{+},\, 1\leq i\ne j\leq n$, where
 the subscripts of $f$ denote partial derivatives.
All partial derivatives are taken at the point ${\bf x}$ and the denominator is assumed
to be different from zero.
  The  function $H_{ij}$ is called the {\it Hicks elasticity of substitution} of the $i$-th
production variable (input) with respect to the $j$-th production variable (input). 

A  production function $f$ is said to satisfy the constant elasticity of substitution property if there exists a nonzero constant $\sigma\in {\mathbb R}$ such that
 \begin{align} \label{1.2}H_{ij}({\bf x})=\sigma\;\; \text{for ${\bf x}\in \mathbb R^{n}_{+}$ and $1\leq i\ne j\leq n$}.\end{align}
 
In this article, first we present the classification result of quasi-sum production functions satisfying the constant elasticity of substitution property. 
Then we prove that if a quasi-sum production function  satisfies the constant elasticity of substitution property, then its graph has vanishing Gauss-Kronecker curvature (or its graph is a flat space) if and only if the production function is either a linearly homogeneous generalized ACMS  function or a linearly homogeneous generalized Cobb-Douglas  function.

\section{\uppercase{Cobb-Douglas  and  ACMS production functions}}

 Cobb and  Douglas  introduced in \cite{CD} a famous two-input production function
 \begin{align} Y = bL^{k}C^{1-k},\end{align}
 where $b$  represents the total factor productivity, $Y$ the total production, $L$ the labor input and $C$ the capital input.
The Cobb-Douglas production function is widely used  in  economics to represent the relationship of an output to inputs.
 Later work in the 1940s prompted them to allow for the exponents on $C$ and $L$ vary, which resulting in estimates that subsequently proved to be very close to improved measure of productivity developed at that time (cf. \cite{D,Fi}).
In its generalized form the Cobb-Douglas production function may be expressed as
\begin{align} f({\bf x}) =\gamma x_{1}^{\alpha_{1}}\cdots x_{n}^{\alpha_{n}},\end{align}
where $\gamma$ is a positive constant and $\alpha_{1},\ldots,\alpha_{n}$ are nonzero constants. 

A production function $f$ is said to be {\it $d$-homogeneous} or {\it homogeneous of degree} $d$, if  
  \begin{align}\label{1}f(tx_{1},\ldots,tx_{n}) = t^{d}f(x_{1},\ldots,x_{n})\end{align}
holds for each $t\in \mathbb R$ for which \eqref{1} is defined.   A homogeneous function of degree one is called  {\it linearly homogeneous}.  

If $d>1$, the homogeneous function exhibits increasing returns to scale, and it exhibits decreasing returns to scale if $d<1$. A linearly homogeneous production function with inputs capital and labor has the properties that the marginal and average physical products of both capital and labor can be expressed as functions of the capital-labor ratio alone. Moreover, in this case if each input is paid at a rate equal to its marginal product, the firm's revenues will be exactly exhausted and there will be no excess economic profit.

Arrow,   Chenery,  Minhas and  Solow introduced in  \cite{ACMS} a two-factor production function given by
\begin{align}\label{1.3}Q=F\cdot (a K^{r}+(1-a) L^{r})^{\frac{1}{r}},\end{align}
where $Q$ is the output, $F$ the factor productivity, $a$ the share parameter, $K,L$ the primary production factors (capital and labor), $r=(s-1)/s$, and $s=1/(1-r)$ is the elasticity of substitution.
A {\it generalized ACMS production function}  is defined  as
\begin{align}\label{1.4} f({\bf x})=\gamma \Big(\sum_{i=1}^{n}a_{i}^{\rho}x_{i}^{\rho}\Big)^{\! \frac{d}{\rho}}, \;\;{\bf x}=( x_{1},\ldots,x_{n})\in D\subset {\mathbb R}^{n}_{+},\end{align}
 with $a_{1},\ldots,a_{n},\gamma,\rho \ne 0$, where $d$ is the degree of homogeneity. 

In this paper, by a {\it homothetic production function} we mean a production function of the form:  
\begin{align} f({\bf x})=F(h(x_1,\ldots,x_n)),\end{align}
where $F$ is a strictly increasing function and  $h(x_1,\ldots,x_n)$ is a homogeneous function of any given degree $d$. A homothetic production function of form
\begin{align}\label{2} f({\bf x})=F\Big(\sum_{i=1}^{n}a_{i}^{\rho}x_{i}^{\rho}\Big),\;\; \text{(resp., $f({\bf x})=F(x_{1}^{\alpha_{1}}\cdots x_{n}^{\alpha_{n}})$}), \end{align}
 is called a {\it homothetic generalized ACMS production function}  (resp., {\it homothetic generalized Cobb-Douglas production function}).

\section{\uppercase {Production functions with constant elasticity of substitution property}}

\begin{theorem}\label{T:1.1} Let $ f({\bf x})= F(h_1 (x_1)+\cdots +h_n (x_n)), n\geq 2,$ be a quasi-sum production function. Then $f$ satisfies the constant elasticity of substitution property if and only if, up to suitable translations,  $f$ is one of the following functions:
 
\begin{enumerate}
\item  a homothetic generalized ACMS production function given by
\begin{align}\label{1.5} f({\bf x})=F\left(c_1 x_{1}^{\frac{\sigma-1}{\sigma}}+\cdots+c_n x_{n}^{\frac{\sigma-1}{\sigma}} \right),\;\;\sigma \ne 1; \end{align} 
\item a homothetic generalized Cobb-Douglas production function given by
\begin{align}\label{11.5} f({\bf x})=F\left( x_{1}^{\alpha_{1}}\cdots x_{n}^{\alpha_{n}} \right); \end{align} 
\item  a two-input production function of the form:
\begin{align}\label{1.6} f({\bf x})=F\! \left(\frac{x_2}{x_1}\right),\end{align}
\end{enumerate}
where $F$ is a strictly increasing function.

\end{theorem}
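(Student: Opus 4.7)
The plan is to compute $H_{ij}$ explicitly for a quasi-sum $f = F(h_1(x_1)+\cdots+h_n(x_n))$, observe that the outer function $F$ drops out of the expression, and thereby convert the CES identity $H_{ij}\equiv\sigma$ into a separation-of-variables condition on the $h_i$. Setting $u=h_1(x_1)+\cdots+h_n(x_n)$, I have $f_{x_i}=F'(u)h_i'(x_i)$, $f_{x_ix_i}=F''(u)h_i'(x_i)^2+F'(u)h_i''(x_i)$, and $f_{x_ix_j}=F''(u)h_i'(x_i)h_j'(x_j)$ for $i\ne j$. Substituting into the definition of $H_{ij}$, the cross-term contribution $2F''/(F')^2$ cancels the two $-F''/(F')^2$ pieces coming from $-f_{x_ix_i}/f_{x_i}^2$ and $-f_{x_jx_j}/f_{x_j}^2$. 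Introducing $g_i(x):=1/h_i'(x)$, so that $g_i'(x)=-h_i''(x)/h_i'(x)^2$, what remains is
\[H_{ij}(\mathbf{x})=\frac{g_i(x_i)/x_i+g_j(x_j)/x_j}{g_i'(x_i)+g_j'(x_j)},\]
independent of $F$ and of the other inputs.

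The CES identity then becomes $\Phi_i(x_i)+\Phi_j(x_j)=0$ with $\Phi_i(x):=g_i(x)/x-\sigma g_i'(x)$. Since the two summands depend on independent variables, each $\Phi_i$ must be a constant $c_{ij}$, with $\Phi_j\equiv -c_{ij}$, and swapping indices gives antisymmetry $c_{ji}=-c_{ij}$. For $n\ge 3$, picking any third index $k$ and chasing the equalities forced by the pairs $(i,j)$, $(i,k)$, $(j,k)$ forces $c_{ij}=0$. Hence $\Phi_i\equiv 0$ and $g_i$ satisfies the Euler equation $xg_i'=(1/\sigma)g_i$; its solutions $g_i(x)=A_ix^{1/\sigma}$ integrate to $h_i(x)=c_i\,x^{(\sigma-1)/\sigma}$ when $\sigma\ne 1$ (giving case (1) after absorbing additive constants into $F$) and to $h_i(x)=\alpha_i\ln x$ when $\sigma=1$, so that $\sum_i h_i=\ln(x_1^{\alpha_1}\cdots x_n^{\alpha_n})$ and we land in case (2).

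For $n=2$ only the single relation $\Phi_1+\Phi_2=0$ remains, so the separation constant $c:=c_{12}$ need not vanish. The subcase $c=0$ reproduces cases (1) and (2). When $c\ne 0$, the inhomogeneous linear ODEs yield $g_1(x)=-\tfrac{c}{\sigma-1}x+A\,x^{1/\sigma}$ and $g_2(x)=\tfrac{c}{\sigma-1}x+B\,x^{1/\sigma}$ (with an analogous logarithmic variant when $\sigma=1$); the distinguished subcase $A=B=0$ yields $h_1(x_1)=-\kappa\ln x_1$, $h_2(x_2)=\kappa\ln x_2$, hence $h_1+h_2=\kappa\ln(x_2/x_1)$ and $f=F(x_2/x_1)$, which is case (3). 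The remaining subcases must, under the translations allowed in the statement and the strict monotonicity of the $h_i$, be reduced to one of (1), (2) or (3). A short converse calculation then verifies that each of the three listed forms is quasi-sum and satisfies the CES condition (with $H_{ij}$ a genuine constant in cases (1)--(2) and the indeterminate $0/0$ in case (3)).

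The principal obstacle I anticipate is this bookkeeping in the $n=2$, $c\ne 0$ regime: I must show that every admissible pair of constants $(A,B)$ produces, after an allowed translation of the argument $h_1+h_2$, a function collapsing to case (3), or else use the strict-monotonicity hypothesis on $h_1,h_2$ to exclude the genuinely mixed solutions. Everything else is routine and follows cleanly from the miraculous cancellation that removed $F$ from $H_{ij}$ at the very first step.
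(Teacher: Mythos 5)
Your route is the paper's route: after the $F''/(F')^2$ cancellation, your identity $\Phi_i(x_i)+\Phi_j(x_j)=0$ with $\Phi_i(x)=g_i(x)/x-\sigma g_i'(x)$ and $g_i=1/h_i'$ is exactly the paper's separated equation $\tfrac{1}{x_ih_i'}+\tfrac{\sigma h_i''}{h_i'{}^2}+\tfrac{1}{x_jh_j'}+\tfrac{\sigma h_j''}{h_j'{}^2}=0$; the $n\ge 3$ argument forcing $\Phi_i\equiv 0$ is its Case (a), the resulting $h_i=c_ix^{(\sigma-1)/\sigma}$ (resp.\ $\alpha_i\ln x$) matches its solution of that ODE, and the $n=2$ dichotomy on the separation constant is its Case (b). So this is not a different proof, and up to the point where you stop it is correct.

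The stopping point is a genuine gap, and you have located it yourself: in the $n=2$, $c\ne 0$ case you only treat $A=B=0$ and assert without argument that the remaining pairs $(A,B)$ collapse to one of the listed forms. Strict monotonicity will not rescue you: on any subdomain where $g_1=-\tfrac{c}{\sigma-1}x+Ax^{1/\sigma}$ keeps a fixed sign, $h_1$ is strictly monotone, so the mixed solutions are admissible and must be reduced, not excluded. What the paper does is integrate $h_i'=1/g_i$ in closed form: $h_1(x_1)=a_1-\tfrac{\sigma}{c}\ln\bigl(b_1+c\,x_1^{(\sigma-1)/\sigma}\bigr)$ with $b_1=-(\sigma-1)A$, and $h_2(x_2)=a_2+\tfrac{\sigma}{c}\ln\bigl(b_2+c\,x_2^{(\sigma-1)/\sigma}\bigr)$, so that $h_1+h_2$ depends only on the ratio $\bigl(b_2+cx_2^{(\sigma-1)/\sigma}\bigr)/\bigl(b_1+cx_1^{(\sigma-1)/\sigma}\bigr)$; the constants $b_1,b_2$ are then removed by the ``suitable translations'' the statement allows, leaving $h_1+h_2=a_0+\tfrac{\sigma-1}{c}\ln(x_2/x_1)$ and hence case (3). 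You need to carry out this integration and reduction explicitly; note also that the translation killing $b_i$ acts on the variable $x_i^{(\sigma-1)/\sigma}$ rather than on $x_i$ itself, so to get the statement literally as written you must either record that convention or observe that the general mixed solution is case (3) only modulo that reparametrization. (Your parenthetical observation that $H_{12}$ is formally $0/0$ in case (3) is accurate and worth keeping, since the cleared equation is what is actually being solved.)
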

 \begin{proof}  
  Let 
\begin{align}\label{2.1} f({\bf x})= G(h_1 (x_1)+\cdots +h_n (x_n))\end{align}
  be a quasi-sum production function. Then $G,h_1,\ldots,h_n$ are  strict monotone functions with $G'>0$. Thus we have       
  \begin{align}\label{2.2} G'(u),h_1'(x_1),\ldots,h'_n(x_n)\ne 0\end{align}  
 with  $u=h_1 (x_1)+\cdots +h_n (x_n).$
  From  \eqref{2.1} we find
    \begin{equation} \begin{aligned}\label{2.3} &f_{x_i x_i}=G' h_i^{''}+F'' h_i'{}^2,\;\; 
    \; f_{x_ix_j}=G'' h_i'h_j',\;\; 1\leq i\ne j\leq n.\end{aligned}  \end{equation}
   Assume that $f$ satisfies the constant elasticity of substitution property. Then there exists a nonzero constant $\sigma$ such that
 \begin{equation} \begin{aligned}\label{2.4} &2 f_{x_{i}}f_{x_{j}}f_{x_{i}x_{j}}-f^{2}_{x_{j}} f_{x_{i}x_{i}}-f_{x_{i}}^{2} f_{x_{j}x_{j}}=\frac{(x_{i}f_{x_{i}}+x_{j}f_{x_{j}})f_{x_{i}}f_{x_{j}}}{\sigma x_{i} x_{j}}
\end{aligned}  \end{equation} for $1\leq  i< j\leq n.$
It follows from \eqref{2.3} and \eqref{2.4} that
\begin{equation} \label{2.5} \frac{1}{x_i h_i'(x_i)}+ \frac{\sigma  h_i'' (x_i) }{h_i'(x_i){}^2} + \frac{1}{x_j h_j'(x_j)}+ \frac{\sigma  h_j'' (x_j) }{h_j'(x_j){}^2}  =0,\;\; 1\leq  i< j\leq n.
 \end{equation}

\vskip.04in 

Now, we divide the proof into two separate cases.
\vskip.05in 

{\bf Case} (a): $n\geq 3$. It follows from \eqref{2.5} that
\begin{align}\label{2.6} &  \frac{1}{x_i h_i'(x_i)}+ \frac{\sigma  h_i'' (x_i) }{h_i'(x_i){}^2}  =0,\;\; i=1,\ldots,n.\end{align}
After solving  \eqref{2.6}  we find
\begin{align} \label{2.7}h_i(x_i)=
\begin{cases} a_i+c_i x_i^{\frac{\sigma-1}{\sigma}} &\text{if $\sigma\ne 1$;}
\\ a_{i}\ln (c_{i}x_{i}) &\text{if $\sigma=1$},\end{cases}
\end{align}
for some constants $a_i,c_i$.  Combining \eqref{2.1} and \eqref{2.7} gives cases (1) and (2) of the theorem.

\vskip.05in

{\bf Case} (b): $n=2$. First we consider the case:
\begin{align}\label{2.8} &  \frac{1}{x_i h_i'(x_i)}+ \frac{\sigma  h_i'' (x_i) }{h_i'(x_i){}^2}  =0\end{align}
for $i=1,2$. After solving \eqref{2.8} we obtain \eqref{1.5} with $n=2$ in the same way as case (a).

Next, let us assume that \eqref{2.8} does not hold. Then it follows from \eqref{2.5} that
\begin{equation}\begin{aligned} \label{2.9}& \frac{1}{x_1 h_1'(x_1)}+ \frac{\sigma  h_1'' (x_1) }{h_1'(x_1){}^2} =k,\;\;\frac{1}{x_2 h_2'(x_2)}+ \frac{\sigma  h_2'' (x_2) }{h_2'(x_2){}^2}  =-k
\end{aligned} \end{equation}
for some nonzero constant $k$.
By solving  \eqref{2.9}  we derive that 
\begin{equation}\begin{aligned}  \label{2.10} &h_1(x_1)=a_1-\frac{\sigma}{k}\ln \left(b_1 +k x_1^{\frac{\sigma-1}{\sigma}}\right),
\;\; h_2(x_2)=a_2+\frac{\sigma}{k}\ln \left(b_2 +k x_2^{\frac{\sigma-1}{\sigma}}\right),
\end{aligned} \end{equation}
for some constants $a_1,a_2,b_1,b_2$. Thus, by applying suitable translations on $x_1,x_2$, we find
\begin{equation}\begin{aligned}  \label{2.11} &h_1(x_1)=a_1-\frac{\sigma}{k}\ln \left(k x_1^{\frac{\sigma-1}{\sigma}}\right),
\;\; h_2(x_2)=a_2+\frac{\sigma}{k}\ln \left(k x_2^{\frac{\sigma-1}{\sigma}}\right),
\end{aligned} \end{equation}
Now, by combining \eqref{2.1} and \eqref{2.11} we have
\begin{equation}\begin{aligned}  \label{2.12} &f=G\left(a_0-\frac{\sigma}{k}\ln \left(k x_1^{\frac{\sigma-1}{\sigma}}\right)+\frac{\sigma}{k}\ln \left(k x_2^{\frac{\sigma-1}{\sigma}}\right)\right)
\\& \hskip.1in =G\left(a_0-\frac{\sigma-1}{k}\ln \left(\frac{x_2}{x_1}\right)\right),\;\; a_0=a_1+a_2.
\end{aligned} \end{equation}
 Consequently, we get a two-input  function of the form \eqref{1.6}. This gives case (3).

The converse is easy to verify.\end{proof}

Theorem \ref{T:1.1} is a reformulation of a result of \cite{L}.

\section{\uppercase{A characterization of generalized ACMS and Cobb-Douglas production functions}}

Each $n$-input production function $f({\bf x})$ can be identified with the {\it graph} of $f$  defined as the non-parametric hypersurface of the Euclidean $(n+1)$-space $\mathbb E^{n+1}$ given by (cf. \cite{c,VV,V})
\begin{align}\label{3.1} L({\bf x})=(x_{1},\ldots,x_{n},f(x_{1},\ldots,x_{n})).\end{align}

For  a hypersurface $M$ of $\mathbb E^{n+1}$, the {\it Gauss map} $$\nu: M\to S^{n+1}$$ is a map which carries  $M$  to the unit hypersphere $S^{n}$ of  $\mathbb E^{n+1}$. The Gauss map is a continuous map  such that $\nu(p)$ is a unit normal vector $\xi(p)$ of $M$ at $p$. The Gauss map can always be defined locally. It can be defined globally if the hypersurface is orientable.  

The differential $d\nu$ of the Gauss map $\nu$ can be used to define a type of extrinsic curvature, known as the {\it shape operator} or Weingarten map.  Since at each point $p\in M$, the tangent space $T_{p}M$ is an inner product space, the shape operator $S_{p}$ can be defined as a linear operator on this space by the formula:
\begin{align} g(S_{p}v,w)=g(d\nu(v),w),\;\; v,w\in T_{p}M,\end{align}
 where $g$ denotes the metric tensor on $M$ induced from the Euclidean metric on $\mathbb E^{n+1}$. 
The second fundamental form $\sigma$ is related with the shape operator $S$ by 
\begin{align} g(\sigma(v,w),\xi(p))=g (S_{p}(v),w)\end{align}
for tangent vectors $v,w$ of $M$ at $p$.
The eigenvalues of the shape operator $S_{p}$ are called the principal curvatures. 

The determinant of the shape operator  $S_{p}$ is called the {\it Gauss-Kronecker curvature}, which is denoted by $G(p)$. Thus the Gauss-Kronecker curvature $G(p)$ is nothing but the product of the principal curvature at $p$.
When  $n=2$, the Gauss-Kronecker curvature is simply called the {\it Gauss curvature}, which is intrinsic due to well-known Gauss' theorema egregium. 

For an $n$-input production function $f$, we put 
\begin{align} W=\sqrt{1+\text{\small$\sum$}_{i=1}^{n} f_{x_i}^{2}}.\end{align}

We recall the following well-known lemma  (see, e.g.  \cite{c}).

\begin{lemma} \label{L:3.1} The Gauss-Kronecker curvature $G$ of the graph of an $n$-input production function $f({\bf x})$ is given by 
\begin{align}\label{3.5}G=\frac{\det (f_{x_ix_j})}{W^{n+2}}.\end{align}
\end{lemma}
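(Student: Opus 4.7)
The plan is to use the standard formula $G = \det(h_{ij})/\det(g_{ij})$, where $(g_{ij})$ and $(h_{ij})$ are the components of the first and second fundamental forms of the graph hypersurface $L(\mathbf x) = (x_1,\ldots,x_n,f(\mathbf x))$ in the natural coordinate frame. From $L_{x_i} = e_i + f_{x_i}\, e_{n+1}$ the induced metric is immediately $g_{ij} = \delta_{ij} + f_{x_i} f_{x_j}$, and the second derivatives of the parametrisation satisfy $L_{x_i x_j} = f_{x_i x_j}\, e_{n+1}$, which is the only ingredient needed for the second fundamental form.

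Taking the unit normal $\xi = W^{-1}(-f_{x_1},\ldots,-f_{x_n},1)$ gives $h_{ij} = \langle L_{x_i x_j},\xi\rangle = f_{x_i x_j}/W$, so $\det(h_{ij}) = \det(f_{x_i x_j})/W^n$. For the metric determinant, I would write $(g_{ij}) = I_n + \mathbf v\mathbf v^{\mathsf T}$ with $\mathbf v = (f_{x_1},\ldots,f_{x_n})^{\mathsf T}$; the matrix determinant lemma then yields $\det(g_{ij}) = 1 + \mathbf v^{\mathsf T}\mathbf v = W^2$. Equivalently, an orthogonal change of basis aligning $\mathbf v$ with the first coordinate axis diagonalises the metric with eigenvalues $W^2,1,\ldots,1$, giving the same conclusion.

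Dividing the two determinants produces $G = \det(f_{x_i x_j})/W^{n+2}$, as claimed. The argument is purely computational and poses no real obstacle; the only step that is more than routine bookkeeping is the rank-one determinant identity above, which is entirely standard and can be verified either by the matrix determinant lemma or by the eigenvalue picture just described.
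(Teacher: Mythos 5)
Your computation is correct and complete: the induced metric, the second fundamental form with respect to the upward unit normal, and the rank-one determinant identity $\det(I_n+\mathbf v\mathbf v^{\mathsf T})=1+\mathbf v^{\mathsf T}\mathbf v=W^2$ all check out, and $G=\det(S)=\det(h_{ij})/\det(g_{ij})$ gives exactly \eqref{3.5}. The paper itself supplies no proof of this lemma --- it is quoted as well known with a citation to \cite{c} --- and your argument is precisely the standard derivation that such a reference would contain, so there is nothing to compare beyond noting that you have filled in the omitted details. The only point worth flagging is that the formula implicitly fixes the orientation given by the normal $\xi=W^{-1}(-f_{x_1},\ldots,-f_{x_n},1)$; with the opposite normal the right-hand side acquires a factor $(-1)^n$, which matters only when $n$ is odd and is irrelevant to the paper's use of the lemma (which only concerns the vanishing of $G$).
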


Also, we recall the following result from \cite{c2012-2}.
\vskip.1in

\noindent {\bf Hessian Determinant Formula}. {\it The  determinant of the Hessian matrix  of a composite function $f=F (h_{1}(x_{1})+\cdots+ h_{n}(x_{n}))$ is given by}
 \begin{equation}\begin{aligned}&  \det(H(f))=(F')^{n}h_{1}''\cdots h_{n}''
 +(F')^{n-1}F'' \sum_{j=1}^{n} h''_{1}\cdots h''_{j-1} h'_{j}{}^{2} h''_{j+1}\cdots h''_{n}.
\end{aligned}\end{equation}
 
\vskip.1in

The next result provides a very simple geometric characterization of generalized ACMS and Cobb-Douglas production functions with degree of homogeneity one.

\begin{theorem} \label{T:4.1} Let $f({\bf x})=F(h_1(x_1)+\cdots +h_n(x_n))$ be a quasi-sum production function satisfying the  constant elasticity of substitution property. Then the graph of $f$ has vanishing Gauss-Kronecker curvature if and only if $f$ is one of the following functions:

\begin{enumerate}

\item a linearly homogeneous generalized ACMS production function given by
\begin{align}\label{3.4} f({\bf x})=\gamma \Big(\sum_{i=1}^{n}a_{i}^{\rho}x_{i}^{\rho}\Big)^{\! \frac{1}{\rho}}, \;\; \gamma, a_1,\ldots,a_n,\rho\ne 0;\end{align}

\item  a linearly homogeneous generalized Cobb-Douglas production function given by
\begin{align}\label{4.31} f({\bf x})=\gamma x_{1}^{\alpha_{1}}\cdots x_{n}^{\alpha_{n}},\;\;\sum_{i=1}^{n}\alpha_{i}=1, \end{align} 
\end{enumerate}
where $\gamma,\alpha_1,\ldots,\alpha_n$ are nonzero constants.
\end{theorem}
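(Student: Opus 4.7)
The plan is to combine the classification in Theorem~\ref{T:1.1} with Lemma~\ref{L:3.1} and the Hessian Determinant Formula. Theorem~\ref{T:1.1} reduces the problem to three explicit families of inner sums $\sum_i h_i(x_i)$ --- the ACMS power-law $h_i(x_i) = c_i x_i^{(\sigma-1)/\sigma}$, the Cobb-Douglas logarithmic $h_i(x_i) = a_i \ln x_i$, and (for $n=2$) the ratio form $h_1 = -\alpha \ln x_1$, $h_2 = \alpha \ln x_2$ --- in each of which the outer function $F$ is still an arbitrary strictly increasing function. By Lemma~\ref{L:3.1}, vanishing Gauss-Kronecker curvature is equivalent to $\det(f_{x_ix_j}) = 0$; inserting $f = F(\sum h_i)$ into the Hessian Determinant Formula and dividing out the nowhere-zero factor $(F')^{n-1}$ converts the geometric condition into a single scalar ODE for $F$ in each case.

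I would dispose of the two-input family first. A direct substitution shows that $h_1''(h_2')^2 + (h_1')^2 h_2''$ vanishes identically while $h_1''h_2'' = -\alpha^2/(x_1 x_2)^2$, so the Hessian determinant equals $-\alpha^2(F')^2/(x_1 x_2)^2$, which is never zero. Hence this family cannot satisfy the curvature condition and contributes nothing. In the ACMS case, the Hessian Determinant Formula factors as a nowhere-zero prefactor times $F'(\rho-1) + F''\rho u$, where $\rho = (\sigma-1)/\sigma$ and $u = \sum c_i x_i^\rho$; integrating the resulting equation $F''/F' = (1-\rho)/(\rho u)$ yields $F(u) = \gamma u^{1/\rho}$, and relabeling $c_i = a_i^\rho$ recovers exactly~\eqref{3.4}. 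In the Cobb-Douglas case, the same procedure produces the linear ODE $F''(u) \sum_j a_j = F'(u)$, whose solution $F(u) = \gamma \exp\!\bigl(u/\sum_j a_j\bigr)$ gives $f = \gamma \prod_i x_i^{\alpha_i}$ with $\alpha_i = a_i/\sum_j a_j$, so that $\sum_i \alpha_i = 1$ --- precisely~\eqref{4.31}. The converse is a short direct substitution check in the same formula.

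The main obstacle is mechanical rather than conceptual: one must factor the two terms of the Hessian Determinant Formula cleanly to isolate the common nonzero prefactor and expose the scalar ODE, and this requires some careful indexing because the inner sum in the formula runs over all $n$ positions of the distinguished index $j$, with products of the $h_i''$'s on either side of $j$. Once that factorization is recorded, the ODEs are elementary first-order equations, the cancellation ruling out case~(3) is transparent, and both implications of the theorem follow immediately.
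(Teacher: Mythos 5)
Your proposal is correct and follows essentially the same route as the paper: reduce via Theorem~\ref{T:1.1} to the three families, apply Lemma~\ref{L:3.1} together with the Hessian Determinant Formula, rule out the ratio case because its Hessian determinant is a nonzero multiple of $(F')^2$, and solve the resulting first-order ODE for $F'$ in the ACMS and Cobb--Douglas cases. The only cosmetic differences are that the paper computes the two-input case directly from $f=F(x_2/x_1)$ and works with $u=x_1^{\alpha_1}\cdots x_n^{\alpha_n}$ (power-law $F$) rather than $u=\sum_i a_i\ln x_i$ (exponential $F$) in the Cobb--Douglas case, which are equivalent parametrizations.
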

\begin{proof} Let $$f({\bf x})=F(h_1(x_1)+\cdots +h_n(x_n))$$ be a quasi-sum production function satisfying the CES property. Then according to Theorem \ref{T:1.1}, $f$ is a homothetic generalized ACMS production function given by \eqref{1.5},
or a homothetic generalized Cobb-Douglas production function given by \eqref{11.5},  or a two-input production function of the form $f=F\big(\frac{x_2}{x_1}\big)$.

First, let us assume that  $f$ is two-input  production function given by $f=F\big(\frac{x_2}{x_1}\big)$ for some strictly increasing function $F$. If the graph of $f$ has vanishing Gauss-Kronecker curvature, then it follows from \eqref{1.6} and Lemma \ref{L:3.1} that  $x_{1}^{-4} F'\big(\frac{x_2}{x_1}\big)^{2}=0$. Hence we find
$F'=0$ which is a contradiction. Therefore this case is impossible.

Next, let us assume that $n\geq 2$ and $f$ is a homothetic generalized ACMS production function given by \eqref{1.5}. Suppose that the graph of $f$ has  vanishing Gauss-Kronecker curvature, then by applying   \eqref{1.5},  Lemma \ref{L:3.1} and the Hessian Determinant Formula, we conclude that
\begin{align}\label{3.7} F'(u)=(\sigma-1)u F''(u)\end{align}
with 
\begin{align}\label{3.8} u=c_1 x_{1}^{\frac{\sigma-1}{\sigma}}+\cdots+c_n x_{n}^{\frac{\sigma-1}{\sigma}}\end{align}
After solving \eqref{3.7} and after applying a suitable translation on $u$, we get 
\begin{align}\label{3.9} F(u)=\alpha u^{\frac{\sigma}{\sigma-1}}\end{align}
for some nonzero constant $\alpha$. Therefore, by combining \eqref{3.8} and \eqref{3.9}, we obtain
\begin{align}\label{3.10} F(u)=\alpha \left(  c_1 x_{1}^{\frac{\sigma-1}{\sigma}}+\cdots+c_n x_{n}^{\frac{\sigma-1}{\sigma}}\right)^{\frac{\sigma}{\sigma-1}}\end{align}
Consequently, the quasi-sum function $f$ is a generalized ACMS production function with degree of homogeneity one.

Finally,  let us assume that $n\geq 2$ and $f$ is a homothetic generalized Cobb-Douglas  function given by \eqref{11.5}. Suppose that the graph of $f$ has  vanishing Gauss-Kronecker curvature. Then it follows from  \eqref{11.5} and  Lemma \ref{L:3.1} that the function $F$ satisfies the following differential equation:
\begin{align}\label{3.11} &(\alpha-1)F'(u)  +\alpha u F''=0,\end{align}
where $$\alpha=\sum_{{i=1}}^{n}\alpha_{i},\;\; u=x_{1}^{\alpha_{1}}\cdots x_{n}^{\alpha_{n}}.$$ 
After solving  \eqref{3.11} we find
\begin{align}\label{3.12} &F(u)  =\gamma u^{\frac{1}{\alpha}}+b\end{align}
for some constants $\gamma,b$ with $\gamma\ne 0$.  Therefore we obtain
\begin{align}\label{3.13} &f({\bf x})  =\gamma (x_{1}^{\alpha_{1}}\cdots x_{n}^{\alpha_{n}})^{\frac{1}{\alpha}}+b\end{align}
 Consequently, up to constants, $f$ is a linearly homogeneous generalized Cobb-Douglas production function.
 
The converse can be verified by direct computation.
\end{proof}

\begin{theorem} \label{T:4.2}Let $f({\bf x})=F(h_1(x_1)+\cdots +h_n(x_n))$ be a quasi-sum production function satisfying the  constant elasticity of substitution property. Then the graph of $f$ is a flat space if and only if either

\begin{enumerate}
\item $f$ is a linearly homogeneous generalized ACMS production function given by
\begin{align}\label{4.40} f({\bf x})=\gamma \Big(\sum_{i=1}^{n}a_{i}^{\rho}x_{i}^{\rho}\Big)^{\! \frac{1}{\rho}},\end{align}
 
\item or $f$ is a linearly homogeneous generalized Cobb-Douglas production function given by
\begin{align}\label{4.41} f({\bf x})=\gamma x_{1}^{\alpha_{1}}\cdots x_{n}^{\alpha_{n}},\;\;\sum_{i=1}^{n}\alpha_{i}=1, \end{align} 
\end{enumerate}
for some nonzero constants $a_{1},\ldots,a_{n},\gamma,\alpha_1,\ldots,\alpha_n$.
\end{theorem}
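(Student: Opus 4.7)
The graph of $f$ sits as a hypersurface $M\subset\mathbb{E}^{n+1}$, and the Gauss equation
\begin{equation*}
R_{ijkl}=h_{il}h_{jk}-h_{ik}h_{jl},\qquad h_{ij}=f_{x_ix_j}/W,
\end{equation*}
shows that $M$ is intrinsically flat (vanishing Riemann tensor) precisely when every $2\times 2$ minor of the Hessian $(f_{x_ix_j})$ vanishes pointwise, i.e.\ when that matrix has pointwise rank at most one. When $n=2$ this is equivalent to $G\equiv 0$ by Gauss's \emph{theorema egregium}, so in the surface case Theorem~\ref{T:4.2} is an immediate reformulation of Theorem~\ref{T:4.1}.

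For the forward direction in any dimension, flatness forces $\det(f_{x_ix_j})\equiv 0$, hence by Lemma~\ref{L:3.1} the Gauss--Kronecker curvature vanishes, and Theorem~\ref{T:4.1} then pins $f$, up to admissible translations, as one of the two linearly homogeneous families in \eqref{4.40}, \eqref{4.41}.

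For the converse I would start from Euler's identity: linear homogeneity yields $\sum_i x_i f_{x_i}=f$, and differentiation in $x_i$ gives $\sum_j x_j f_{x_ix_j}=0$. Thus $\mathbf{x}$ sits in $\ker(\mathrm{Hess}\,f)$ at every point, which immediately forces rank $\leq n-1$ and hence $G\equiv 0$ (so $n=2$ is finished). For $n\geq 3$ one would need to push rank $\leq n-1$ down to rank $\leq 1$ by exploiting the explicit algebraic form of each family: in the Cobb--Douglas case one writes $\mathrm{Hess}\,f=f\,D\,(\alpha\alpha^T-\mathrm{diag}\,\alpha)\,D$ with $D=\mathrm{diag}(1/x_i)$, and in the ACMS case one obtains a parallel rank-one-plus-diagonal decomposition driven by the vector $(a_i^\rho x_i^{\rho-1})$ after substituting $f=\gamma u^{1/\rho}$; one then hopes the constraints $\sum\alpha_i=1$ (respectively the degree-one homogeneity condition) force the diagonal correction to collapse the matrix fully to rank one.

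The main obstacle is precisely this final collapse. A direct $\{i,j\}$-minor of the Cobb--Douglas Hessian evaluates to
\begin{equation*}
f_{x_ix_i}f_{x_jx_j}-f_{x_ix_j}^{\,2}\;=\;f^{2}\alpha_i\alpha_j\bigl(\textstyle\sum_{k\neq i,j}\alpha_k\bigr)\big/(x_ix_j)^{2},
\end{equation*}
which is strictly nonzero as soon as $n\geq 3$, and the ACMS computation is analogous. Hence in both families the Hessian has rank exactly $n-1$ (not $1$), and the full Riemann tensor of $M$ does \emph{not} vanish for $n\geq 3$. This strongly suggests that ``flat space'' in Theorem~\ref{T:4.2} is intended in the sense made explicit in the abstract's parenthetical ``its graph has vanishing Gauss--Kronecker curvature (or its graph is a flat space)'', i.e.\ as a synonym for $G\equiv 0$ rather than the vanishing of the full Riemann tensor. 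Under that reading Theorem~\ref{T:4.2} is a rephrasing of Theorem~\ref{T:4.1} and the converse direction requires nothing beyond the direct verification already carried out there; under the strictly intrinsic reading the statement holds only for $n=2$.
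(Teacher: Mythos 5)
Your forward direction is exactly the paper's: flatness forces every $2\times 2$ minor of the second fundamental form to vanish via the Gauss equation, hence $\det(f_{x_ix_j})=0$ and $G\equiv 0$ by Lemma \ref{L:3.1}, and Theorem \ref{T:4.1} then does all the work. Where you diverge is the converse, and there your analysis is more careful than the paper's. The paper's entire argument for the converse is the assertion that it ``is straightforward to verify'' that the graphs of \eqref{4.40} and \eqref{4.41} are flat; no computation is offered. Your minor computation shows that this verification in fact fails for $n\geq 3$: for the linearly homogeneous Cobb--Douglas function,
\begin{equation*}
f_{x_ix_i}f_{x_jx_j}-f_{x_ix_j}^{\,2}=\frac{f^{2}\alpha_i\alpha_j}{x_i^{2}x_j^{2}}\bigl(1-\alpha_i-\alpha_j\bigr)=\frac{f^{2}\alpha_i\alpha_j}{x_i^{2}x_j^{2}}\sum_{k\neq i,j}\alpha_k,
\end{equation*}
and since the $\alpha_k$ are nonzero with $\sum_k\alpha_k=1$, these minors cannot all vanish when $n\geq 3$ (simultaneous vanishing would force every $\alpha_i=\tfrac{1}{2}$ and hence $n=2$). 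The ACMS family behaves the same way: the $\{i,j\}$ minor there is proportional to $\sum_{k\neq i,j}a_k^{\rho}x_k^{\rho}$. So the second fundamental form has rank exactly $n-1$ (the position vector lies in its kernel by Euler's identity, exactly as you observe), which kills the Gauss--Kronecker curvature but leaves nonzero sectional curvatures whenever $n\geq 3$.

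In short, the gap you should be worried about is in the paper, not in your argument. Under the standard meaning of ``flat'' (vanishing Riemann tensor), Theorem \ref{T:4.2} and the unproved converse in its proof are correct only for $n=2$, where the \emph{theorema egregium} identifies flatness with $G\equiv 0$ and the statement collapses to Theorem \ref{T:4.1}; your reading of ``flat space'' as a synonym for vanishing Gauss--Kronecker curvature, suggested by the abstract's parenthetical, is the only one under which the theorem holds for all $n$, and under that reading nothing beyond Theorem \ref{T:4.1} is needed. One small refinement to your write-up: an individual minor can vanish for special exponents (e.g.\ a single pair with $\alpha_i+\alpha_j=1$), so the precise claim should be that \emph{some} $2\times 2$ minor is nonzero at every point when $n\geq 3$, which is all your argument requires.
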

\begin{proof}  It  is straightforward to verify that the graphs of a linearly homogeneous  generalized ACMS  production function given by \eqref{4.40} and the graph of a linearly homogeneous  generalized Cobb-Douglas production function given by \eqref{4.41} are flat spaces. Therefore, after combining this fact with Theorem \ref{T:4.1} we obtain Theorem \ref{T:4.2}.
\end{proof}


\begin{thebibliography}{12}

\bibitem{AM} J. Acz\'el and G  Maksa: {\it Solution of the rectangular $m\times n$ generalized bisymmetry equation and of the problem of consistent aggregation}, J. Math. Anal. Appl. {\bf 203} (1996), 
104--126.

\bibitem{AH} R. G. Allen and J. R. Hicks: {\it  A Reconsideration of the theory of value}, Pt. II, Economica,
{\bf 1} (1934),  196--219.

\bibitem{ACMS} K. J. Arrow, H. B.  Chenery,  B. S. Minhas and  R. M.   Solow:  {\it Capital-labor substitution and
economic efficiency}, Rev.  Econom. Stat. {\bf 43} (1961), 225--250.

\bibitem{book} B.-Y. Chen: {\it Pseudo-Riemannian geometry, $\delta$-invariants and applications}, World Scientific, Hackensack, New Jersey, 2011.

\bibitem{c} B.-Y. Chen: {\it On some geometric properties of quasi-sum production models}, J. Math. Anal. Appl. {\bf 292} (2012),  192--199.

\bibitem{c2012} B.-Y. Chen: {\it Classification of $h$-homogeneous production functions with constant elasticity of substitution}, Tamkang J. Math. {\bf 43} (2012), 321--328.

\bibitem{c2012-2}  B.-Y. Chen: {\it An explicit formula of Hessian determinants of composite functions and its applications}, Kragujevac J. Math. {\bf 36} (2012), 1--14.

\bibitem{CD} C. W. Cobb and P. H.   Douglas:  {\it A theory of production}, Amer. Econom. Rev. {\bf 18} (1928), 139--165.

\bibitem{D} P.  H. Douglas: {\it The Cobb-Douglas production function once again: Its history, its testing, and some new empirical values}, J.  Polit. Econom. {\bf 84} (1976), 903--916.

\bibitem{Fi} J. Filipe and G. Adams: {\it The estimation of the Cobb-Douglas function}, Eastern Econom. J. {\bf 31}  (2005), 427--445.

\bibitem{H} J. R. Hicks: {Theory of wages}, London, Macmillan, 1932.

\bibitem{L} L. Losonczi:  {\it Production functions having the CES property}, Acta Math. Acad. Paedagog. Nyh‡zi. (N.S.) {\bf 26}(1) (2010), 113--125.

\bibitem{M}  D. McFadden: {\it Constant elasticity of substitution production functions}, Rev.  Econom. Stat. {\bf  30} (1963),  73--83.

\bibitem{VV} A. D. V\^{\i}lcu and    G. E. V\^{\i}lcu: {\it On some geometric properties of the generalized CES production functions},  Appl. Math. Comput. {\bf 218} (2011), 124--129,

\bibitem{V}   G. E. V\^{\i}lcu: {\it  A geometric perspective on the generalized Cobb-Douglas production functions}, Appl. Math. Lett. {\bf 24} (2011), 777--783.

\end{thebibliography}
\end{document}